\title{Jucys--Murphy elements and Weingarten matrices}
\author[P.~Zinn-Justin]{Paul~Zinn-Justin}
\address{Paul Zinn-Justin, 
UPMC Univ Paris 6, CNRS UMR 7589, LPTHE, 75252 Paris Cedex, FRANCE.}
\email{pzinn\,@\,lpthe.jussieu.fr}
\thanks{PZJ was supported by EU Marie Curie Research Training Network
``ENRAGE'' MRTN-CT-2004-005616, ESF program ``MISGAM''
and ANR program ``GRANMA'' BLAN08-1-13695.}
\numberwithin{equation}{section}
\date{July 2009}
\newtheorem{prop}{Proposition} 
\newtheorem{lemma}{Lemma}
\newcommand\vcenterbox[1]{\vcenter{\hbox{#1}}}
\newcommand\C{\mathbb{C}}
\renewcommand\S{\mathcal{S}}
\renewcommand\H{\mathcal{H}}
\newcommand\B{\mathcal{B}}
\newcommand\Z{\mathbb{Z}}
\newcommand\transp[2]{s_{#1,#2}{}}
\newcommand\ket[1]{\left|#1\right\rangle}
\newcommand\bra[1]{\left\langle#1\right|}
\newcommand\braket[2]{\left\langle#1\,\vrule\,#2\right\rangle}
\newdimen{\cellsize}
\newcommand\bigboxes{\setlength{\cellsize}{18pt}\def\boxformat{}}
\newcommand\medboxes{\setlength{\cellsize}{14pt}\def\boxformat{}}
\newcommand\smallboxes{\setlength{\cellsize}{8pt}\def\boxformat{\scriptstyle}}
\newsavebox{\cellcontent}
\def\hidehrule#1#2{\kern-#1
  \hrule height#1 depth#2 \kern-#2 }%
\def\hidevrule#1#2{\kern-#1{\dimen\cellcontent=#1%
    \advance\dimen\cellcontent by#2\vrule width\dimen\cellcontent}\kern-#2 }%
\def\makeblankbox#1#2{\hbox{\lower\dp\cellcontent\vbox{\hidehrule{#1}{#2}%
    \kern-#1 
    \hbox to \wd\cellcontent{\hidevrule{#1}{#2}%
      \raise\ht\cellcontent\vbox to #1{}
      \lower\dp\cellcontent\vtop to #1{}
      \hfil\hidevrule{#2}{#1}}%
    \kern-#1\hidehrule{#2}{#1}}}}
\newcommand\cellify[1]{\defaultcell%
\sbox{\cellcontent}{\vbox to \cellsize{%
\vfill%
\hbox to \cellsize{\hfill$\boxformat #1$\hfill}
\vfill}}%
\rlap{\drawnbox}
\usebox{\cellcontent}}
\newcommand\tableau[1]{\vtop{\let\\\cr
\baselineskip -16000pt \lineskiplimit 16000pt \lineskip 0pt
\ialign{&\cellify{##}\cr#1\crcr}}}
\newcommand\defaultcell{\gdef\drawnbox{
\makeblankbox{0.2pt}{0.2pt}
}}
\newcommand\vdotscell{\gdef\drawnbox{\kern-1.6pt\vbox{\baselineskip=4pt\lineskiplimit=0pt\hbox{}\hbox{.}\hbox{.}\hbox{.}\hbox{}}}}
\newcommand\hdotscell{\gdef\drawnbox{\vbox to \cellsize{\hbox{\kern1pt$\ldotp\ldotp\ldotp$}}}}
\newcommand\vhdotscell{\gdef\drawnbox{\rlap{\kern-1.6pt\vbox{\baselineskip=4pt\lineskiplimit=0pt\hbox{}\hbox{.}\hbox{.}\hbox{.}\hbox{}}}\vbox to \cellsize{\hbox{\kern1pt$\ldotp\ldotp\ldotp$}}}}
\begin{document}
\begin{abstract}
We provide a compact proof of the recent formula of Collins and Matsumoto
for the Weingarten matrix of the orthogonal group using Jucys--Murphy
elements.
\end{abstract}
\maketitle
\begin{center}
Keywords: Weingarten matrix, Jucys--Murphy elements, orthogonal group
\\
MSC numbers: 05E10, 20B30, 20C40
\end{center}
\section{Introduction}
\subsection{Motivation}
In this note, we discuss the calculation of the so-called Weingarten matrix
for unitary and orthogonal groups.
Let us recall its origin. The initial problem is the computation
of the integral over certain compacts groups of matrices,
of polynomials of the entries of these matrices. 
More abstractly, the problem amounts
to finding the projector from a tensor product of representations
onto its invariant subspace. As will be described in the next section,
the result is expressed in terms of a certain matrix $\mathbf{W}$,
the Weingarten matrix.
Such computations are useful in various areas of theoretical physics
and mathematics: it seems that they were originally considered by
Weingarten in the context of gauge theories \cite{Weingarten}
and then reappeared in matrix models and in free probability.
In particular Collins et al \cite{CS-weingarten,Collins-weingarten} 
studied this Weingarten matrix
using representation theory of the symmetric group and derived some explicit
expressions for $\mathbf{W}$. It is the goal of this note to provide elementary
proofs of these results using Jucys--Murphy elements. Note that
the only case where this procedure is new is the case of the orthogonal
group, where the original proof by Collins and Matsumoto \cite{CM-WOn} involved 
fairly involved machinery (see also \cite{Banica-WOn}); use of Jucys--Murphy for the unitary case
had already been considered in \cite{Novak-weingarten,MN-weingarten}.\footnote{The author thanks B.~Collins for pointing out these references.}

\subsection{Weingarten matrix as a pseudo-inverse}
Let $V$ be a (real, or complex) vector 
space endowed with a symmetric non-degenerate bilinear form $\braket{\cdot}{\cdot}$.
We are interested in an explicit expression for the orthogonal projector $\Pi$ 
onto a given subspace $V_0$.
The strategy is then to (i) identify a set of elements $\ket{u_i}\in V$ which generate $V_0$;
and (ii) try the Ansatz $\Pi=\sum_{i,j} \ket{u_i} \mathbf{W}_{i,j} \bra{u_j}$
where the matrix $\mathbf{W}$ is to be determined.
For $\Pi$ to be orthogonal, $\mathbf{W}$ must be a symmetric matrix.
Furthermore, writing that $\bra{u_i}\Pi\ket{u_j}=\braket{u_i}{u_j}$
leads to the following matrix identity: $\mathbf{GWG}=\mathbf{G}$,
where $\mathbf{G}$ is the {\em Gram matrix}: $\mathbf{G}_{i,j}=\braket{u_i}{u_j}$.
In fact it is easy to see that these are all the conditions on $\mathbf{W}$.
If the $\ket{u_i}$ are independent, $\mathbf{G}$ is invertible and we conclude directly 
that $\mathbf{W}=\mathbf{G}^{-1}$.
If $\mathbf{G}$ is not invertible, $\mathbf{W}$ is not uniquely determined, but
a convenient choice \cite{CS-weingarten}, which we make here,
is to require $\mathbf{W}$ to be the {\em pseudo-inverse}\/ of $\mathbf{G}$,
that is the symmetric matrix such that $\mathbf{GWG}=\mathbf{G}$ and 
$\mathbf{WGW}=\mathbf{W}$.

In what follows, $V$ will be equipped with a representation of some group,
and $V_0$ will be the invariant subspace with respect to that action.
In this case $\mathbf{W}$ will be called the
{\em Weingarten matrix}.

\subsection{Jucys--Murphy elements}\label{secmurphy}
In this note we only use standard methods due to Young, 
Jucys \cite{Jucys} and Murphy \cite{Murphy}.
Let $n$ be a positive integer. We consider in what follows the natural embedding
$\C[\S_1]\subset\C[\S_2]\subset\cdots\subset\C[\S_n]$, where elements
of $\C[\S_k]$ act trivially on numbers greater than $k$.
Denote the transposition of $i$ and $j$ by $\transp{i}{j}$.
We recall that to a partition $\lambda$
with $|\lambda|=n$ parts, represented by a Young diagram,
one can associate the set $SYT(\lambda)$
of standard Young tableaux $T$ of shape $\lambda$ 
i.e.\ fillings of each box $(i,j)\in\lambda$
with numbers $T(i,j)\in\{1,\ldots,n\}$ which are increasing along rows
and columns.

We now define the Jucys--Murphy elements. $m_k$ is an element
of $\C[\S_k]$ given by:
\begin{equation}\label{defmurphy}
m_1=0,\qquad m_k=\sum_{i=1}^{k-1} \transp{i}{k}\qquad k=2,\ldots,n
\end{equation}
(the definition of $m_1$ is a convenient convention). 
Note that $m_k$ ($k\ge2$) commutes with
$\C[\S_{k-1}]$; this implies in particular that the $m_k$, $k=2,\ldots,n$,
form a commutative subalgebra of $\C[\S_n]$. It is in fact a maximal
commutative subalgebra.

Next we introduce 
Young's {\it orthogonal}\/ idempotents $e_T$, $T$ standard Young tableau
with $n$ boxes. They are characterized by the following properties:
first they are a complete set of orthogonal idempotents:
\begin{equation}\label{idem}
e_T e_{T'}=\delta_{TT'} e_T\qquad \sum_{T: |T|=n} e_T = 1
\end{equation}
and secondly they ``diagonalize'' the Jucys--Murphy elements:
\begin{equation}\label{diagmurphy}
m_k e_T = e_T m_k = c(T_k) e_T\qquad k=1,\ldots,n
\end{equation}
where $c(T_k)$ is the content of the box labelled $k$ in $T$: $c(T_k)=j-i$ if $T(i,j)=k$.

{\em Example:} there are two tableaux of shape $\tableau{&\\\\}$:
\newcommand\taba{e_{\smallboxes\tableau{1&2\\3\\}}}
\newcommand\tabb{e_{\smallboxes\tableau{1&3\\2\\}}}
\begin{align*}
m_1\taba&=0 & m_2\taba&=\taba & m_3\taba&=-\taba\\
m_1\tabb&=0 & m_2\tabb&=-\tabb & m_3\tabb&=\tabb\\
\end{align*}

Finally define 
\begin{equation}
P_\lambda=\sum_{T\in SYT(\lambda)} e_{T}
\end{equation}
$P_\lambda$ is the central idempotent
associated to $\lambda\vdash n$.
It is well-known that $P_\lambda$ can also be expressed in terms
of the corresponding character $\chi_\lambda$ of the symmetric group, namely that
\begin{equation}\label{char}
P_\lambda=\frac{\chi_\lambda(1)}{|\S_n|}\sum_{\sigma\in\S_n} \chi_\lambda(\sigma^{-1})\sigma
\end{equation}
though we shall not need this expression here.
For the sake of completeness, 
let us also remark that since $e_T$ commutes with all the $m_k$, 
it should be expressible as a polynomial of them; and indeed, 
we have the following (Lagrange interpolation type) inductive definition:
$e_T=e_{\bar T} 
\prod_{T': \bar T'=\bar T, T'\ne T} \frac{m_n - c(T'_n)}{c(T_n)-c(T'_n)}$,
where
$\bar T$ denotes the tableau
$T$ with its last box removed. 

\section{Weingarten matrix for the unitary group}
Let $\tau$ be a positive integer.
As warming up, we consider the simple and well-studied case 
\cite{Collins-weingarten,Novak-weingarten,MN-weingarten}
of the action of
$U(\tau)$ (or its complexification $GL(\tau)$)
on $V=(\C^\tau)^{\otimes n}\otimes (\C^\tau)^{\otimes n}\simeq Hom((\C^\tau)^{\otimes n})$
where the first $n$ factors of the tensor product
are in the fundamental representation of
$U(\tau)$ and the last $n$ in its dual representation.
Schur--Weyl duality provides us with a set of invariants indexed
by permutations in $\S_n$.
The corresponding Gram matrix is easily computed:
\[
\mathbf{G}_{\sigma,\sigma'}=\tau^{\text{number of cycles of $\sigma^{-1}\sigma'$}}
\qquad \sigma,\sigma'\in \S_n
\]

Next we need the classical identity: \cite{Jucys}
\begin{prop}[Jucys] \label{uprop}
\begin{equation}\label{uid}
\prod_{k=1}^n (\tau+m_k)=
\sum_{\sigma\in\S_n} \sigma\ \tau^{\text{number of cycles of $\sigma$}}
\end{equation}
\end{prop}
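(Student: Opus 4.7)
My plan is to prove the identity by induction on $n$. The base case $n=1$ is immediate: the left-hand side reduces to $\tau+m_1=\tau$, while the right-hand side is $\tau^1$ times the identity of $\S_1$.

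For the inductive step, assume the identity holds for $n-1$ inside $\C[\S_{n-1}]$, and multiply on the right by $(\tau+m_n)$. Since $m_n=\sum_{i=1}^{n-1}\transp{i}{n}$, this yields
\[
\prod_{k=1}^n(\tau+m_k)=\sum_{\sigma\in\S_{n-1}}\sigma\,\tau^{c(\sigma)+1}+\sum_{\sigma\in\S_{n-1}}\sum_{i=1}^{n-1}\sigma\transp{i}{n}\,\tau^{c(\sigma)},
\]
where $c(\sigma)$ denotes the number of cycles of $\sigma$, computed in $\S_{n-1}$.

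The key combinatorial step is to reorganize this into a sum over $\S_n$. Split $\S_n$ according to whether $\pi(n)=n$ or not. The permutations of $\S_n$ fixing $n$ are precisely the elements of $\S_{n-1}$, and viewing $\sigma\in\S_{n-1}$ inside $\S_n$ adds the singleton cycle $\{n\}$, so the number of cycles increases by one; this accounts exactly for the first sum. For the second sum, I would check that the map $(\sigma,i)\mapsto\sigma\transp{i}{n}$ is a bijection from $\S_{n-1}\times\{1,\ldots,n-1\}$ onto $\{\pi\in\S_n:\pi(n)\ne n\}$: indeed, if $\pi=\sigma\transp{i}{n}$ with $\sigma\in\S_{n-1}$ then $\pi^{-1}(n)=i$, which recovers both $i$ and $\sigma=\pi\transp{i}{n}$.

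It remains to verify the cycle count: if $\pi=\sigma\transp{i}{n}$ with $i=\pi^{-1}(n)$ and $\sigma\in\S_{n-1}$, the cycle of $\pi$ containing $n$ (of length $\ge 2$) splits, under right multiplication by $\transp{i}{n}$, into the fixed point $\{n\}$ plus a cycle on the remaining elements; hence the number of cycles of $\sigma$ in $\S_{n-1}$ equals the number of cycles of $\pi$ in $\S_n$. This matches the exponent $\tau^{c(\sigma)}$ in the second sum with $\tau^{\#\text{cycles of }\pi}$ on the right-hand side. Checking this cycle-counting bookkeeping is really the only non-trivial point; the rest is a bijective rearrangement of the sum.
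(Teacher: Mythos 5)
Your proof is correct and is essentially the same argument as the paper's: both proceed by induction on $n$, splitting $\S_n$ according to whether $n$ is a fixed point, and both rest on the same bijection $\pi\mapsto(\pi\transp{\pi^{-1}(n)}{n},\pi^{-1}(n))$ together with the observation that this operation detaches $n$ from its cycle without changing the relevant cycle count. The paper phrases it as a term-by-term identification after expanding the product, while you phrase it as multiplying the $(n-1)$-case by $(\tau+m_n)$ and rearranging, but these are the same computation.
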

There are many ways to prove this identity. We provide an elementary
one now, based on a standard inductive construction of permutations.
\begin{proof}First, note that there are as many terms in the r.h.s.\ as in
the l.h.s.\ after expanding the product $(\tau+m_1)\cdots(\tau+m_n)$.
Our proof will consist in
an identification term by term. We proceed by induction on $n$.
Consider a permutation $\sigma\in\S_n$.
Either (i) $n$ is a fixed point of $\sigma$, in which
case we can apply
the induction hypothesis to
$\sigma_{|\{1,\ldots,n-1\}}\in\S_{n-1}$
and we know that it corresponds
to one term in $(\tau+m_1)\cdots(\tau+m_{n-1})$
with coefficient $\tau^{\text{number of cycles of $\sigma_{|\{1,\ldots,n-1\}}$}}$; 
furthermore, $\sigma$ has one more cycle that
$\sigma_{|\{1,\ldots,n-1\}}$ and therefore we identify $\sigma$
with the term in $(\tau+m_1)\cdots(\tau+m_n)$ with the same
choice in the first $n-1$ factors, and in which we further pick
the multiplication by $\tau$ inside $\tau+m_n$;
or (ii) $n$ is not a fixed point in which case
we can similarly apply the induction hypothesis to
$\sigma\transp{\sigma^{-1}(n)}{n}{}_{|\{1,\ldots,n-1\}}\in\S_{n-1}$:
\[
\sigma=\text{(other cycles\dots)}
\quad
\vcenterbox{\begin{tikzpicture}[bend right]
\node (a) at (0:1) {$n$};
\node (b) at (120:1) {$\sigma(n)$};
\node (c) at (240:1) {$\sigma^{-1}(n)$};
\draw[->] (a) to (b);
\draw[dotted,->] (b) to (c);
\draw[->] (c) to (a);
\end{tikzpicture}}
\hskip1cm
\sigma\transp{\sigma^{-1}(n)}{n}= \text{(other cycles\dots)}
\quad
\vcenterbox{\begin{tikzpicture}[bend right]
\node (a) at (0:1) {$n$};
\node (b) at (120:1) {$\sigma(n)$};
\node (c) at (240:1) {$\sigma^{-1}(n)$};
\draw[->] (c) to (b);
\draw[dotted,->] (b) to (c);
\draw[->] (a.south east) to[in=305,out=235,min distance=1cm] (a.north east);
\end{tikzpicture}}
\]
Noting that $\sigma$ has as many cycles
as $\sigma\transp{\sigma^{-1}(n)}{n}{}_{|\{1,\ldots,n-1\}}$,
we conclude that it is identified
with the term in $(\tau+m_1)\cdots(\tau+m_n)$ with the same choice
in the first $n-1$ factors, and in which we pick
the transposition $\transp{\sigma^{-1}(n)}{n}$ inside $\tau+m_n$.

The induction is concluded by noting that the statement at $n=1$ is trivial.
\end{proof}

One observes that the r.h.s. of \eqref{uid} 
looks very similar to the entries of
the Gram matrix $\mathbf{G}$. In fact, if we call
$G$ the quantity in \eqref{uid}, it is easy to see that
$\mathbf{G}$ is the matrix of $G\in\C[\S_n]$
acting in either the left or right
regular representation of $\C[\S_n]$, with standard basis $\S_n$.

Now, by inserting $1=\sum_{T:|T|=n} e_T$ in the formula \eqref{uid} above
and applying \eqref{diagmurphy}, one finds that
$e_T G$ depends on $T$ only via its shape $\lambda$; therefore,
one can
write 
\[
G=\sum_{\lambda\vdash n} c_\lambda
P_\lambda\qquad c_\lambda:= \prod_{(i,j)\in \lambda} (\tau+j-i)
\]
Thus, 
$G^{-1}=\sum_{\lambda\vdash n} c_\lambda^{-1} P_\lambda$
when $G$ is invertible, and more generally, noting
that the matrix of $P_\lambda$ acting in left and right representation
is symmetric (cf \eqref{char}), we conclude:
\begin{prop}[Collins] If one defines
\[
W=\sum_{\substack{\lambda\vdash n\\c_\lambda\ne 0}}
c_\lambda^{-1}P_\lambda
\]
then the Weingarten matrix $\mathbf{W}$ is the matrix of $W$
in both left or right regular representation.
\end{prop}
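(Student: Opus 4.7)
My plan is to verify the three defining properties of the pseudo-inverse at the level of the group algebra $\C[\S_n]$, and then transport them to matrices via the fact that the left (or right) regular representation is a faithful algebra homomorphism: sums, products, and identities of elements translate into the corresponding identities of matrices. Thus, if I can show that $G$ and $W$ satisfy $GWG=G$, $WGW=W$, and that both act as symmetric matrices, the proposition will follow.

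The algebraic identities are immediate from the orthogonality and completeness of the central idempotents $P_\lambda$. Since $P_\lambda P_\mu=\delta_{\lambda\mu}P_\lambda$, the elements $G=\sum_\lambda c_\lambda P_\lambda$ and $W=\sum_{c_\lambda\ne 0} c_\lambda^{-1} P_\lambda$ commute and satisfy
\[
GW=WG=\sum_{c_\lambda\ne 0} P_\lambda,
\]
which is an idempotent. Multiplying on the right by $G$ (and using that the terms with $c_\lambda=0$ contribute nothing to $G$) yields $GWG=G$; multiplying by $W$ instead yields $WGW=W$. These are the equations that characterize the Moore--Penrose pseudo-inverse up to the symmetry condition.

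For symmetry, I would use the character formula \eqref{char}. In the left regular representation with basis $\S_n$, the matrix entry of $\sigma\in\S_n$ at $(\pi,\pi')$ equals $1$ precisely when $\pi=\sigma\pi'$, so the matrix entry of a general element $\sum_\sigma a_\sigma \sigma$ is $a_{\pi\pi'^{-1}}$. Hence the matrix is symmetric iff $a_\sigma=a_{\sigma^{-1}}$ for all $\sigma$. For $P_\lambda$ we have $a_\sigma=\frac{\chi_\lambda(1)}{|\S_n|}\chi_\lambda(\sigma^{-1})$, and since in $\S_n$ every permutation is conjugate to its inverse we have $\chi_\lambda(\sigma)=\chi_\lambda(\sigma^{-1})$; therefore the matrix of $P_\lambda$, and hence of $W$, is symmetric. (The same argument applies to the right regular representation.)

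Combining these three points, the matrix of $W$ satisfies exactly the conditions $\mathbf{G}\mathbf{W}\mathbf{G}=\mathbf{G}$, $\mathbf{W}\mathbf{G}\mathbf{W}=\mathbf{W}$, and $\mathbf{W}^t=\mathbf{W}$ which characterize the pseudo-inverse of $\mathbf{G}$, so it coincides with the Weingarten matrix. I do not expect any real obstacle: the only subtle point is the symmetry statement, and this is forced by the conjugation property of characters of the symmetric group rather than by any computation with explicit entries.
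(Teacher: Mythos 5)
Your proposal is correct and follows essentially the same route as the paper, which simply notes that $G=\sum_\lambda c_\lambda P_\lambda$ gives $G^{-1}=\sum_\lambda c_\lambda^{-1}P_\lambda$ in the invertible case and, in general, invokes the orthogonality of the $P_\lambda$ together with the symmetry of their matrices (via \eqref{char}) to identify $W$ with the pseudo-inverse. You have merely made explicit the details the paper leaves implicit: the identities $GWG=G$ and $WGW=W$ from idempotent orthogonality, and the symmetry from $\chi_\lambda(\sigma)=\chi_\lambda(\sigma^{-1})$.
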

Theorem 2.1 of \cite{Collins-weingarten} is recovered by replacing $P_\lambda$
with its expression \eqref{char} and by noting that $c_\lambda$ is up to a constant factor
 the dimension of the $GL(\tau)$ irreducible representation associated to $\lambda$,
that is the Schur function with partition $\lambda$ and parameters $\underbrace{1,\ldots,1}_\tau$.

\section{Weingarten matrix for the orthogonal group}
We now consider the case of $O(\tau)$ (either real or complex orthogonal group)
acting on $V=(\C^\tau)^{\otimes 2n}\simeq Hom((\C^\tau)^{\otimes n})$
where $\C^\tau$ is in the fundamental representation of
$O(\tau)$. The Weingarten matrix in this case was considered in \cite{CS-weingarten,JBZ-On,CM-WOn}.
Once again Schur--Weyl duality provides us with
a set of generators of the invariant subspace, as elements of the Brauer algebra
of size $n$, which for our purposes are conveniently defined as follows:
they are involutions without fixed points of $\{1,\ldots,2n\}$. Let us denote
by $\B_n$ their set. The Gram matrix turns out to be:
\[
\mathbf{G}_{\pi,\pi'}=
\tau^{\frac{1}{2}\text{number of cycles of $\pi\pi'$}}
\qquad
\pi,\pi'\in \B_n
\]
Graphically, one half of the number of cycles of $\pi\pi'$ is simply
the number of loops produced by pasting together $\pi$ and $\pi'$
viewed as pairing of points $\{1,\ldots,2n\}$:
\[
\pi=\vcenterbox{
\begin{tikzpicture}[x=0.5cm,y=0.5cm]
\path (0,-1) rectangle (7,1);
\draw (0,0) -- (7,0);
\draw[thick] (1,0) to[in=90,out=90] (2,0);
\draw[thick] (3,0) to[in=90,out=90] (5,0);
\draw[thick] (4,0) to[in=90,out=90] (6,0);
\foreach\i in {1,2,...,6}
\path[fill=red,draw=black] (\i,0) circle (2pt) node[below,black] {$\scriptstyle\i$};
\end{tikzpicture}}
\quad
\pi'=\vcenterbox{
\begin{tikzpicture}[x=0.5cm,y=0.5cm]
\path (0,-1) rectangle (7,1);
\draw (0,0) -- (7,0);
\draw[thick] (1,0) to[in=90,out=90] (2,0);
\draw[thick] (3,0) to[in=90,out=90] (6,0);
\draw[thick] (4,0) to[in=90,out=90] (5,0);
\foreach\i in {1,2,...,6}
\path[fill=red,draw=black] (\i,0) circle (2pt) node[below,black] {$\scriptstyle\i$};
\end{tikzpicture}}
\quad
\mathbf{G}_{\pi,\pi'}=
\vcenterbox{\begin{tikzpicture}[x=0.5cm,y=0.5cm]
\path (0,-1) rectangle (7,1);
\draw (0,0) -- (7,0);
\draw[thick] (1,0) to[in=270,out=270] (2,0);
\draw[thick] (3,0) to[in=270,out=270] (5,0);
\draw[thick] (4,0) to[in=270,out=270] (6,0);
\draw[thick] (1,0) to[in=90,out=90] (2,0);
\draw[thick] (3,0) to[in=90,out=90] (6,0);
\draw[thick] (4,0) to[in=90,out=90] (5,0);
\foreach\i in {1,2,...,6}
\path[fill=red,draw=black] (\i,0) circle (2pt);
\end{tikzpicture}}=\tau^2
\]

$\S_{2n}$ acts by conjugation on $\B_n$, and this action is transitive.
Let us pick a particular element $\beta_n\in \B_n$:
\[
\beta_n=\transp{1}{2}\cdots\transp{2n-1}{2n}
=\vcenterbox{
\begin{tikzpicture}[x=0.5cm,y=0.5cm]
\path (0,-1) rectangle (9,1);
\draw (0,0) -- (4.5,0);
\draw[dotted] (4.5,0) -- (6.5,0);
\draw (6.5,0) -- (9,0);
\draw[thick] (1,0) to[in=90,out=90] (2,0);
\draw[thick] (3,0) to[in=90,out=90] (4,0);
\draw[thick] (7,0) to[in=90,out=90] (8,0);
\foreach\i in {1,2,3,4}
\path[fill=red,draw=black] (\i,0) circle (2pt) node[below,black] {$\scriptstyle\i$};
\fill[fill=red,draw=black] (7,0) circle (2pt) node[below,black] {$\scriptstyle 2n-1\ $};
\fill[fill=red,draw=black] (8,0) circle (2pt) node[below,black] {$\ \scriptstyle 2n$};
\end{tikzpicture}}
\]
Then $\B_n \simeq \S_{2n}/\H_n$, where $\H_n$ is the stabilizer of $\beta_n$.
Explicitly, $\H_n$ is the subgroup of $\S_{2n}$
that is generated by the transpositions $\transp{2i-1}{2i}$,
$i=1,\ldots,n$, and by the ``double elementary transpositions''
$\transp{2i-1}{2i+1}\transp{2i}{2i+2}$, $i=1,\ldots,n-1$,
making it isomorphic to
the semi-direct product $(\Z/2\Z)^n\ltimes\S_n$ 
(or wreath product $\Z/2\Z\wr\S_n$) 
a.k.a.\ the hyperoctahedral group. 

It is convenient to define $\C[\B_n]$ as the subspace of $\C[\S_{2n}]$
consisting of vectors which are stable by right multiplication by $\H_n$. This
way we can work entirely inside the group algebra $\C[\S_{2n}]$ (this means that we use the definitions
of section \ref{secmurphy} with $n$ replaced with $2n$). Define
\[
P_{\H_n}=\frac{1}{|\H_n|}
\sum_{h\in \H_n} h
\]
$P_{\H_n}$ averages over the group $\H_n$. 
Its image by multiplication 
on the right on $\C[\S_{2n}]$ is exactly $\C[\B_n]$.
$\C[\B_n]$ possesses a standard basis indexed by $\B_n$: to $\pi\in\B_n$ one associates
$\sigma P_{\H_n}$ where $\sigma$ is any permutation such that $\pi=\sigma\beta_n\sigma^{-1}$. 

Below we shall use the property
that in order to average over $\H_n$ 
one can first average over a subgroup $\mathit\Gamma$ of it,
i.e.\ with obvious notations 
$P_{\H_n}=P_{\mathit\Gamma}P_{\H_n}=P_{\H_n}P_{\mathit\Gamma}$.

We now have the following remarkable identity, similar to Prop.~\ref{uprop}:
\begin{prop}\label{keyoprop}
There exists a choice of representatives of cosets of $\S_{2n}/\H_n$,
that is of $\sigma_\pi$ satisfying 
$\sigma_\pi \beta_n \sigma_\pi^{-1}=\pi$ for each $\pi\in\B_n$, such that
\begin{equation}\label{oid}
\prod_{k=1}^n (\tau+m_{2k-1})
=\sum_{\pi\in \B_n} \sigma_\pi\ \tau^{\frac{1}{2}\text{number of cycles of $\beta_n\pi$}}
\end{equation}
\end{prop}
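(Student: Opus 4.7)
The plan is to imitate the inductive argument of Proposition~\ref{uprop}: upon expanding the left-hand side, its $1\cdot 3\cdots(2n-1)=|\B_n|$ terms should be matched one-to-one with pairings $\pi\in\B_n$, in such a way that the matched terms agree both as elements of $\C[\S_{2n}]$ (this determines $\sigma_\pi$) and in their power of $\tau$. The base case $n=1$ is trivial (take $\sigma_\pi=1$ for the unique $\pi\in\B_1$).

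For the induction step, suppose representatives $\sigma_{\pi'}$ ($\pi'\in\B_{n-1}$) have been chosen so that \eqref{oid} holds at rank $n-1$, and abbreviate $r(\pi'):=\frac12(\text{number of cycles of }\beta_{n-1}\pi')$. Every term in the expansion of $\prod_{k=1}^{n-1}(\tau+m_{2k-1})\cdot(\tau+m_{2n-1})$ has the form $\sigma_{\pi'}\tau^{r(\pi')}c$, with $c\in\{\tau\}\cup\{\transp{i}{2n-1}:1\le i\le 2n-2\}$. If $c=\tau$, match this term to the pairing $\pi\in\B_n$ formed by adjoining $\{2n-1,2n\}$ to $\pi'$, and set $\sigma_\pi:=\sigma_{\pi'}$ (extended to $\S_{2n}$ by fixing $2n-1,2n$); then $\sigma_\pi\beta_n\sigma_\pi^{-1}=\sigma_{\pi'}\beta_{n-1}\transp{2n-1}{2n}\sigma_{\pi'}^{-1}=\pi$. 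If instead $c=\transp{i}{2n-1}$, write $i\in\{2j-1,2j\}$ for the unique $j\le n-1$, let $\{a,b\}=\{\sigma_{\pi'}(2j-1),\sigma_{\pi'}(2j)\}$ (the $j$-th pair of $\pi'$), and set $\sigma_\pi:=\sigma_{\pi'}\transp{i}{2n-1}$; a direct computation, using that $\sigma_{\pi'}$ fixes $2n-1,2n$, shows that $\sigma_\pi\beta_n\sigma_\pi^{-1}$ is the pairing $\pi$ obtained from $\pi'$ by breaking $\{a,b\}$ and reassembling it with $\{2n-1,2n\}$, the orientation being determined by whether $i=2j-1$ or $i=2j$.

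This assignment is a bijection: any $\pi\in\B_n$ either has $\{2n-1,2n\}$ as a pair (case~(i), with $\pi'$ uniquely determined by removal) or has distinct $\pi(2n-1),\pi(2n)\in\{1,\ldots,2n-2\}$ (case~(ii), with $\pi'$ obtained by merging these two points into a single pair, and with $(j,i)$ uniquely determined by the matching of $\pi(2n-1)$ against $\sigma_{\pi'}(2j-1)$ or $\sigma_{\pi'}(2j)$).

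The main obstacle is checking that the $\tau$-exponents coincide, i.e.\ that $\frac12(\text{number of cycles of }\beta_n\pi)$ equals $r(\pi')+1$ in case~(i) and $r(\pi')$ in case~(ii). Case~(i) is immediate: $\beta_n\pi$ agrees with $\beta_{n-1}\pi'$ on $\{1,\ldots,2n-2\}$ and has $2n-1,2n$ as two additional fixed points, contributing $+2$ cycles. Case~(ii) is most transparent via the diagrammatic interpretation recalled above: one passes from the Brauer diagram of $\pi'\cup\beta_{n-1}$ to that of $\pi\cup\beta_n$ by inserting two new points $2n-1,2n$ joined by a new bottom arc, and rerouting the old top arc $(a,b)$ through them as the two top arcs $(2n-1,a)$ and $(2n,b)$. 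This local surgery preserves the total number of loops, giving exactly the required equality and completing the induction.
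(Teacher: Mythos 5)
Your proof is correct and follows essentially the same route as the paper's: a term-by-term bijective matching of the $(2n-1)!!$ terms by induction on $n$, with the same two cases (whether $\{2n-1,2n\}$ is a pair of $\pi$ or not) and the same loop-preservation argument for the $\tau$-exponent. The only cosmetic difference is that you peel the factor $\tau+m_{2n-1}$ off on the right (giving $\sigma_\pi=\sigma_{\pi'}\transp{i}{2n-1}$), whereas the paper orders the product as $(\tau+m_{2n-1})\cdots(\tau+m_1)$ and takes $\sigma_\pi=\transp{\pi(2n)}{2n-1}\sigma_{\pi'}$; both are legitimate since the Jucys--Murphy elements commute.
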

\begin{proof}The proof is extremely similar to that of Prop.~\ref{uprop}.
It is convenient to order the product in the l.h.s.\ as
$(\tau+m_{2n-1})\cdots(\tau+m_1)$.
Note that there are as many terms in the two sides of the equation,
namely $(2n-1)!!$.
Once again we shall provide a term by term identification,
and proceed by induction. Start with a pairing $\pi\in\B_n$. There
are two possibilities. Either (i) $\pi(2n)=2n-1$, in which case
one applies the induction hypothesis to $\pi_{|\{1,\ldots,2n-2\}}$, and
note that $\pi\beta_n$ has two more cycles than their restriction
to $\{1,\ldots,2n-2\}$, namely the two cycles coming from the extra loop
$\vcenterbox{\begin{tikzpicture}[x=0.5cm,y=0.5cm]\path (0,-0.5) rectangle (2,0.5); \draw (0,0) -- (2,0); \draw[thick] (0.5,0) to[in=90,out=90] (1.5,0); \draw[thick] (0.5,0) to[in=270,out=270] (1.5,0); \fill[fill=red,draw=black] (0.5,0) circle (2pt) node[above left=-2pt,black] {$\scriptstyle 2n-1$};
\fill[fill=red,draw=black] (1.5,0) circle (2pt) node[above right=-2pt,black] {$\scriptstyle 2n$};
\end{tikzpicture}}$. Then $\pi$ corresponds to the same term
in $(\tau+m_{2n-3})\cdots(\tau+m_1)$ and to the term $\tau$ in $\tau+m_{2n-1}$,
with the same representative $\sigma_\pi=\sigma_{\pi_{|\{1,\ldots,2n-2\}}}$.
Or (ii) $\pi(2n)<2n-1$, in which case one conjugates $\pi$ by $s_{\pi(2n),2n-1}$,
with the following effect:
\[
\pi=\vcenterbox{
\begin{tikzpicture}[x=0.5cm,y=0.5cm]
\path (0,-1) rectangle (7,1);
\draw (0,0) -- (1.5,0);
\draw[dotted] (1.5,0) -- (2.5,0);
\draw (2.5,0) -- (3.5,0);
\draw[dotted] (3.5,0) -- (4.5,0);
\draw (4.5,0) -- (7,0);
\draw[thick] (1,0) to[in=90,out=90] (6,0);
\draw[thick] (3,0) to[in=90,out=90] (5,0);
\fill[fill=red,draw=black] (1,0) circle (2pt) node[below,black] {$\scriptstyle \pi(2n)$};
\fill[fill=red,draw=black] (3,0) circle (2pt) node[below,black] {$\scriptstyle \pi(2n-1)$};
\fill[fill=red,draw=black] (5,0) circle (2pt) node[below,black] {$\scriptstyle 2n-1$};
\fill[fill=red,draw=black] (6,0) circle (2pt) node[below,black] {$\scriptstyle\ 2n$};
\end{tikzpicture}}
\quad
\transp{\pi(2n)}{2n-1}
\pi
\transp{\pi(2n)}{2n-1}
=\vcenterbox{
\begin{tikzpicture}[x=0.5cm,y=0.5cm]
\path (0,-1) rectangle (7,1);
\draw (0,0) -- (1.5,0);
\draw[dotted] (1.5,0) -- (2.5,0);
\draw (2.5,0) -- (3.5,0);
\draw[dotted] (3.5,0) -- (4.5,0);
\draw (4.5,0) -- (7,0);
\draw[thick] (1,0) to[in=90,out=90] (3,0);
\draw[thick] (5,0) to[in=90,out=90] (6,0);
\fill[fill=red,draw=black] (1,0) circle (2pt) node[below,black] {$\scriptstyle \pi(2n)$};
\fill[fill=red,draw=black] (3,0) circle (2pt) node[below,black] {$\scriptstyle \pi(2n-1)$};
\fill[fill=red,draw=black] (5,0) circle (2pt) node[below,black] {$\scriptstyle 2n-1$};
\fill[fill=red,draw=black] (6,0) circle (2pt) node[below,black] {$\scriptstyle\ 2n$};
\end{tikzpicture}}
\]
Now apply the induction hypothesis to $\pi':=\transp{\pi(2n)}{2n-1}
\pi
\transp{\pi(2n)}{2n-1}_{|\{1,\ldots,2n-2\}}$, noting that the number of cycles
of $\pi'\beta_{n-1}$ is the same as that of $\pi\beta_n$ (the loop
passing through $\pi(2n-1)$ and $\pi(2n)$ has simply shrunk).
Define $\sigma_\pi=\transp{\pi(2n)}{2n-1}\sigma_{\pi'}$. Then
$\sigma_\pi \beta_n \sigma_\pi^{-1}=\pi$, and $\pi$ corresponds to the same
term in $(\tau+m_{2n-3})\cdots(\tau+m_1)$ as $\pi'$, and to
the transposition $\transp{\pi(2n)}{2n-1}$ in $\tau+m_{2n-1}$.

The statement at $n=1$ is trivial ($\sigma_{\beta_1}=1$).
\end{proof}

Note that contrary to the unitary case, the l.h.s.\ of \eqref{oid} is a nonsymmetric 
polynomial of the Jucys--Murphy elements and
is therefore not central.
Its introduction is justified by the
\begin{lemma}Let $G$ be the quantity in \eqref{oid}:
\begin{equation}\label{defGO}
G=\prod_{k=1}^n (\tau+m_{2k-1})
\end{equation}
Then $G P_{\H_n}=P_{\H_n} G$, that is $G$ acting by multiplication on the right
leaves the subspace $\C[\B_n]$ stable.
Furthermore, its matrix in the standard basis of $\C[\B_n]$ is $\mathbf{G}$.
\end{lemma}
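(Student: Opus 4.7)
The plan is to combine Proposition~\ref{keyoprop} with the standard anti-involution $\iota\colon\C[\S_{2n}]\to\C[\S_{2n}]$ sending each group element to its inverse. Since the JM elements $m_k$ are sums of transpositions and commute pairwise, $\iota(G)=G$; since $\H_n$ is a group, $\iota(P_{\H_n})=P_{\H_n}$; consequently $\iota(GP_{\H_n})=P_{\H_n}G$, and the commutation $GP_{\H_n}=P_{\H_n}G$ is equivalent to $GP_{\H_n}$ being $\iota$-invariant.

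To verify the latter, I would read off the coefficient of an arbitrary $g\in\S_{2n}$. Writing $c(\pi):=\tfrac12\#\text{cyc}(\beta_n\pi)$, Proposition~\ref{keyoprop} gives $GP_{\H_n}=\sum_{\pi\in\B_n}\sigma_\pi P_{\H_n}\,\tau^{c(\pi)}$. Since $g$ lies in a unique left coset $\sigma_\pi\H_n$, namely the one corresponding to $\pi_g:=g\beta_n g^{-1}$, the coefficient of $g$ in $GP_{\H_n}$ equals $\tau^{c(\pi_g)}/|\H_n|$. Applying $\iota$ sends $g$ to $g^{-1}$, so the coefficient of $g$ in $P_{\H_n}G=\iota(GP_{\H_n})$ is $\tau^{c(\pi_{g^{-1}})}/|\H_n|$ with $\pi_{g^{-1}}=g^{-1}\beta_n g$. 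Hence the whole commutation reduces to the scalar identity $c(\pi_g)=c(\pi_{g^{-1}})$ for every $g$.

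This is the main (and only non-routine) obstacle. Using $\beta_n^2=1$ one has
\[
g^{-1}\bigl(\beta_n\pi_g\bigr)^{-1}g \;=\; g^{-1}(g\beta_n g^{-1}\beta_n)g \;=\; \beta_n g^{-1}\beta_n g \;=\; \beta_n\pi_{g^{-1}},
\]
which displays $\beta_n\pi_{g^{-1}}$ as the conjugate of $(\beta_n\pi_g)^{-1}$; inversion and conjugation both preserve cycle type, so $\#\text{cyc}(\beta_n\pi_{g^{-1}})=\#\text{cyc}(\beta_n\pi_g)$, proving $GP_{\H_n}=P_{\H_n}G$.

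For the matrix claim, Proposition~\ref{keyoprop} reads $GP_{\H_n}=\sum_\pi\tau^{c(\pi)}E_\pi$ in the standard basis $E_\pi:=\sigma_\pi P_{\H_n}$ of $\C[\B_n]$. Using the commutation just proved, for any $\pi\in\B_n$,
\[
E_\pi\, G \;=\; \sigma_\pi\, GP_{\H_n} \;=\; \sum_{\pi'\in\B_n}\tau^{c(\pi')}\,\sigma_\pi\sigma_{\pi'}P_{\H_n};
\]
since $\sigma_\pi\sigma_{\pi'}P_{\H_n}$ depends only on the coset, setting $\pi'':=\sigma_\pi\pi'\sigma_\pi^{-1}$ identifies this with $E_{\pi''}$, while $\beta_n\pi'=\sigma_\pi^{-1}(\pi\pi'')\sigma_\pi$ gives $c(\pi')=\tfrac12\#\text{cyc}(\pi\pi'')$. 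The coefficient of $E_{\pi''}$ in $E_\pi G$ is therefore $\mathbf{G}_{\pi,\pi''}$, as required.
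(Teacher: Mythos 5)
Your proof is correct and follows essentially the same route as the paper's: express $GP_{\H_n}$ via Proposition~\ref{keyoprop}, apply the inverse anti-automorphism to deduce $GP_{\H_n}=P_{\H_n}G$, then left-multiply by $\sigma_\pi$ and reindex to read off the matrix entries. The only difference is that you spell out explicitly (via the conjugacy of $\beta_n\pi_{g^{-1}}$ with $(\beta_n\pi_g)^{-1}$) the invariance of the coefficients under $g\mapsto g^{-1}$, which the paper dismisses with ``clearly the r.h.s.\ is invariant''.
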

\begin{proof}
Proposition \ref{keyoprop} implies that
\[
GP_{\H_n}=\frac{1}{|\H_n|}
\sum_{\sigma\in\S_{2n}} \sigma\ \tau^{\frac{1}{2}\text{number of cycles of $\sigma\beta_n\sigma^{-1}\beta_n$}}
\]
Apply to both sides the anti-isomorphism of 
$\C[\S_{2n}]$ which sends a permutation to its inverse. Clearly the r.h.s.\ 
is invariant, as
are $G$ and $P_{\H_n}$; thus $GP_{\H_n}=P_{\H_n}G$. Since $\C[\B_n]$ is 
by definition the image of $P_{\H_n}$ acting on the right, we have the
first part of the lemma.

The second part consists in rewriting the equality above (with $GP_{\H_n}$ replaced
with $P_{\H_n}G$) in the standard
basis of $\C[\B_n]$, the $\sigma_\pi P_{\H_n}$ ($\pi\in\B_n$):
\begin{align*}
\sigma_\pi P_{\H_n} G &=
\frac{1}{|\H_n|}
\sum_{\sigma\in\S_{2n}} \sigma_\pi\sigma\ \tau^{\frac{1}{2}\text{number of cycles of $\sigma\beta_n\sigma^{-1}\beta_n$}}\\
&=
\frac{1}{|\H_n|}
\sum_{\sigma'\in\S_{2n}} \sigma'\ \tau^{\frac{1}{2}\text{number of cycles of $\sigma'\beta_n\sigma'{}^{-1}\sigma_\pi\beta_n\sigma_\pi^{-1}$}}&& (\sigma'=\sigma_\pi\sigma)\\
&=\sum_{\pi'\in\B_{n}} \mathbf{G}_{\pi',\pi}\, \sigma_{\pi'} P_{\H_n} && (\pi'=\sigma'\beta_n\sigma'{}^{-1})
\end{align*}
\end{proof}

As an amusing corollary, this implies that the matrices
$\mathbf{G}$ commute for distinct values of the parameter $\tau$.

In order to obtain an explicit formula for $\mathbf{G}$, we need the following
\begin{prop}
Let $T$ be a tableau with $2n$ boxes. Then
$P_{\H_n} e_T\ne 0$ implies that the numbers $(2k-1,2k)$ are on the same line of $T$
(i.e.\ $\bigboxes\vcenterbox{\tableau{\scriptscriptstyle 2k-1&\scriptscriptstyle 2k\\}}$)
for all $k=1,\ldots,n$.
\end{prop}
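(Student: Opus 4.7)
Plan: The proof should split on the behaviour of each pair $(2k-1,2k)$ in $T$ — same row (the desired case), same column (easy), or different row and different column (the main obstacle). Since $\transp{2k-1}{2k}\in\H_n$, the absorption identity $P_{\H_n}=P_{\H_n}\cdot\tfrac12(1+\transp{2k-1}{2k})$ gives
\[
P_{\H_n}e_T=P_{\H_n}\cdot\tfrac12(1+\transp{2k-1}{2k})\,e_T.
\]
Young's orthogonal form describes the action of $\transp{2k-1}{2k}$ on $e_T$ through the content difference $d_k:=c(T_{2k})-c(T_{2k-1})$: one has $\transp{2k-1}{2k}e_T=e_T$ when $d_k=+1$ (pair in same row), $\transp{2k-1}{2k}e_T=-e_T$ when $d_k=-1$ (pair in same column), and otherwise a genuine mixing with the idempotent $e_{T^{(k)}}$ attached to the tableau $T^{(k)}$ obtained by swapping the labels $2k-1$ and $2k$. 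In the same-column case this forces $(1+\transp{2k-1}{2k})e_T=0$ and hence $P_{\H_n}e_T=0$, contradicting the hypothesis, so this subcase is disposed of.

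It remains to treat the case in which $d_k\notin\{\pm1\}$ for some $k$. I would induct on $n$, and for the inductive step use the coset decomposition
\[
P_{\H_n}=\frac1n\sum_{j=1}^n\sigma_j\,P_{\H_n^{\mathrm{last}}},\qquad\H_n^{\mathrm{last}}:=\H_{n-1}\times\langle\transp{2n-1}{2n}\rangle,
\]
where $\H_n^{\mathrm{last}}$ is the stabilizer of the unordered pair $\{2n-1,2n\}$ in $\H_n$ and $\sigma_j\in\H_n$ exchanges the $j$-th pair with the $n$-th pair. If the last pair $(2n-1,2n)$ is in the same row of $T$, then $P_{\H_n^{\mathrm{last}}}e_T=P_{\H_{n-1}}e_T$; Young's branching places $e_T$ in a single isotypic summand of the restriction to $\C[\S_{2n-2}]$, indexed by the partition obtained from the shape of $T$ by removing the horizontal domino of $\{2n-1,2n\}$, and the inductive hypothesis for $n-1$ forces the remaining pairs into their own rows.

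\textbf{Main obstacle.} The genuinely new case is when $(2n-1,2n)$ lie in different row and different column of $T$. Then the swapped tableau $T^{(n)}$ is also a standard Young tableau of the same shape $\lambda$, and the idempotents $e_T$ and $e_{T^{(n)}}$ lie in two distinct isotypic copies of the same Specht module inside the restriction to $\C[\S_{2n-2}]$, corresponding to the two orders in which the two removable corners of $\lambda$ can be removed. The relation coming from $\transp{2n-1}{2n}\in\H_n$ alone merely yields a proportionality between $P_{\H_n}e_T$ and $P_{\H_n}e_{T^{(n)}}$, not their vanishing. To close this subcase I would exploit the pair-swap elements $\sigma_j\in\H_n$, which act nontrivially between the two isotypic copies; combining their invariance relations with the within-pair relation should produce an overdetermined linear system whose only solution is $P_{\H_n}e_T=P_{\H_n}e_{T^{(n)}}=0$. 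This interplay between the within-pair transpositions $\transp{2k-1}{2k}$ and the pair-swapping generators of $\H_n$ is the subtle point that I expect to require the most care.
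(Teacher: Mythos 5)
Your treatment of the same-column case is correct (and in fact coincides with one piece of the paper's argument, namely that $P_{\H_n}(\transp{2k-1}{2k}-1)=0$), and the inductive reduction to the last pair is sound, since $P_{\H_n}=P_{\H_n}P_{\H_{n-1}}$ and $e_{T''}e_T=e_T$ for $T''$ the restriction of $T$. But the case you yourself flag as the main obstacle --- $2k-1$ and $2k$ in different rows \emph{and} different columns --- is not actually proved. At that point you only assert that combining the within-pair relation with the pair-swap elements of $\H_n$ ``should produce an overdetermined linear system whose only solution is zero''; no such system is exhibited, and since $\transp{2k-1}{2k}\,e_T$ then involves the \emph{off-diagonal} matrix unit $E_{T^{(k)}T}$ rather than the idempotent $e_{T^{(k)}}$, it is not even clear which unknowns your system would be in. This is a genuine gap, not a detail to be filled in routinely.

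The missing idea is to package the extra relations you are groping for into Jucys--Murphy elements rather than into Young's orthogonal form. One proves the identity $P_{\H_n}(m_{2k}-m_{2k-1}-1)=0$ by writing $m_{2k}-m_{2k-1}-1=(\transp{2k-1}{2k}-1)+\sum_{i=1}^{k-1}(\transp{2i-1}{2k}-\transp{2i-1}{2k-1}+\transp{2i}{2k}-\transp{2i}{2k-1})$: the first term is killed by averaging over $\langle\transp{2k-1}{2k}\rangle$, and the $i^{\text{th}}$ summand by averaging over the copy of $\H_2$ inside $\H_n$ acting on $\{2i-1,2i,2k-1,2k\}$ --- these are exactly the pair-swap elements you wanted to exploit, but they enter linearly and need no analysis of how they act on idempotents. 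Right-multiplying by $e_T$ and using $m_je_T=c(T_j)e_T$ converts the identity into the scalar equation $\bigl(c(T_{2k})-c(T_{2k-1})-1\bigr)P_{\H_n}e_T=0$, so $P_{\H_n}e_T\ne0$ forces $c(T_{2k})-c(T_{2k-1})=1$, which for the \emph{consecutive} entries $2k-1,2k$ of a standard tableau forces $2k$ to sit immediately to the right of $2k-1$. This disposes of all three of your cases at once, with no induction and no appeal to the orthogonal form.
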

This can actually be found in \cite{BG-zonal-domino}.
\begin{proof}
Let $k$ be an integer between $1$ and $n$.
The key of the proof is the following identity:
\begin{equation}\label{keyid}
P_{\H_n}(m_{2k}-m_{2k-1}-1)
=0
\end{equation}
Writing $P_{\H_n}(m_{2k}-m_{2k-1}-1)=P_{\H_n}
(\transp{2k-1}{2k}-1+\sum_{i=1}^{k-1}(\transp{2i-1}{2k}-\transp{2i-1}{2k-1}+
\transp{2i}{2k}-\transp{2i}{2k-1}))$, one notes that the first term
is annihilated by $1+\transp{2k-1}{2k}$, that is averaging over a subgroup
$\Z/2\Z$ of $\H_n$,
while the $i^{\text{th}}$ term in the sum is annihilated
by averaging over the subgroup $\H_2$ of $\H_n$ which acts on $\{2i-1,2i,2k-1,2k\}$
(the latter calculation being simply the special case $n=2$ of the formula),
which results in the equality \eqref{keyid}.

Now multiply on the right \eqref{keyid} by $e_T$ for some standard Young tableau $T$.
We find
\[
P_{\H_n} e_T 
 (c(T_{2k})-c(T_{2k-1})-1)=0
\]
If $P_{\H_n}e_T\ne 0$, then $c(T_{2k})-c(T_{2k-1})-1=0$, which by inspection
implies that the box $2k$ is directly to the right of the box $2k-1$ in $T$.
\end{proof}


As a consequence, the tableaux $T$ for which $P_{\H_n} e_T\ne 0$ have a very special structure; they are in bijection with tableaux of $n$ boxes by the ``doubling'' procedure
in which each box $\vcenterbox{\tableau{k}}$ is replaced with two boxes
$\bigboxes\vcenterbox{\tableau{\scriptscriptstyle 2k-1&\scriptscriptstyle 2k\\}}$.
E.g.
\[
\tableau{1&3\\2\\}\quad\longrightarrow\quad\tableau{1&2&5&6\\3&4\\}
\]
In particular, the only allowed shapes have {\em even}\/ lengths of rows;
let us denote them by $2\lambda$ where $\lambda\vdash n$.

We can now proceed similarly to the calculation 
of the previous section, that is insert  say on the left
$1=\sum_T e_T$ in \eqref{defGO} and then project using
$P_{\H_n}$.  We find that
$P_{\H_n}e_T G$ is either zero or depends on $T$ only via its shape $2\lambda$.
We conclude by direct computation that
\[
P_{\H_n}G=P_{\H_n} \sum_{\lambda\vdash n} c_\lambda P_{2\lambda}
\qquad
c_\lambda:=\prod_{(i,j)\in\lambda} (\tau+2j-1-i)
\]
In other words, multiplication on the right by $G$
on $\C[\B_n]$ is the same as multiplying by $\sum_{\lambda\vdash n} 
c_\lambda P_{2\lambda}$ (on the left or the right, since the $P_{2\lambda}$
are central).
Thus, we finally have the
\begin{prop}
Define
\begin{equation}\label{final}
W=\sum_{\substack{\lambda\vdash n\\c_\lambda\ne0}} c_\lambda^{-1} P_{2\lambda} 
\end{equation}
Then $\mathbf{W}$ is the matrix of $W$ acting by multiplication 
on the left or the right on $\C[\B_n]$.
\end{prop}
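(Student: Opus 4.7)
The plan is to verify the three defining properties of the pseudo-inverse from Section 1.2: that $\mathbf{W}$ is symmetric, and satisfies $\mathbf{GWG}=\mathbf{G}$ and $\mathbf{WGW}=\mathbf{W}$. Given the spectral description of $G$ just established, the argument reduces to straightforward manipulation of orthogonal central idempotents, together with a short adjoint calculation for the symmetry.

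For the two algebraic identities, I would first note that $W$ is central in $\C[\S_{2n}]$ (being a sum of central idempotents), hence commutes with $P_{\H_n}$ and preserves the subspace $\C[\B_n]$. The paragraph preceding the proposition has shown that right multiplication by $G$ on $\C[\B_n]$ agrees with multiplication by $\sum_\lambda c_\lambda P_{2\lambda}$. Invoking the orthogonality $P_{2\lambda}P_{2\mu}=\delta_{\lambda\mu}P_{2\lambda}$, a one-line computation gives
\[
GWG = \sum_{c_\lambda\ne 0} c_\lambda P_{2\lambda} = \sum_{\lambda\vdash n} c_\lambda P_{2\lambda} = G,\qquad WGW = \sum_{c_\lambda\ne 0} c_\lambda^{-1} P_{2\lambda} = W
\]
as operators on $\C[\B_n]$. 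Translating to matrices in the standard basis $\{\sigma_\pi P_{\H_n}\}_{\pi\in\B_n}$ then yields the desired identities $\mathbf{GWG}=\mathbf{G}$ and $\mathbf{WGW}=\mathbf{W}$.

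For the symmetry of $\mathbf{W}$, I would equip $\C[\S_{2n}]$ with its natural inner product (making $\S_{2n}$ an orthonormal basis) and check two facts. First, a direct count shows $\braket{\sigma_\pi P_{\H_n}}{\sigma_{\pi'} P_{\H_n}}=\delta_{\pi,\pi'}/|\H_n|$, so the standard basis of $\C[\B_n]$ is orthogonal. Second, if $*$ denotes the anti-involution $\sigma\mapsto\sigma^{-1}$ of $\C[\S_{2n}]$, then right multiplication by $x$ has adjoint right multiplication by $x^*$, so it suffices to verify $W^*=W$. By the character formula \eqref{char} together with $\chi_{2\lambda}(\sigma^{-1})=\chi_{2\lambda}(\sigma)$, each $P_{2\lambda}$ is $*$-invariant, and therefore so is $W$. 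Hence right multiplication by $W$ is self-adjoint with respect to an orthogonal basis and has real matrix entries, so $\mathbf{W}$ is symmetric.

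The whole proof is essentially formal once the spectral decomposition of $G$ is in hand; the only mildly delicate point is the symmetry argument, where one must keep clear the distinction between the anti-involution $*$ on $\C[\S_{2n}]$ (which fixes $W$) and matrix transposition (which one is ultimately trying to verify fixes $\mathbf{W}$). I do not foresee any serious technical obstacle.
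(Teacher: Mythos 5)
Your proposal is correct and fills in, in the natural way, exactly the verification the paper leaves implicit: the two pseudo-inverse identities follow at once from the orthogonality of the central idempotents $P_{2\lambda}$ together with the spectral form of $G$ on $\C[\B_n]$, and the symmetry of $\mathbf{W}$ is the same observation the paper makes via \eqref{char} (that each $P_{2\lambda}$ is fixed by $\sigma\mapsto\sigma^{-1}$), here phrased as self-adjointness with respect to the orthogonal, equal-norm basis $\{\sigma_\pi P_{\H_n}\}$. No gaps; this is essentially the paper's own (unwritten) argument.
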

Note that $P_{2\lambda}$ plays here the role of projector onto
isotypic components of $\C[\B_n]$ as a $\C[\S_{2n}]$-module.
The main result of \cite{CM-WOn} (Thm 3.1) is formula \eqref{final} in which
these projectors have been rewritten more explicitly
using \eqref{char}, i.e.\ 
$(P_{2\lambda})_{\pi,\pi'}=\frac{\chi_{2\lambda}(1)}{|\S_{2n}|}\sum_{\sigma\in\S_{2n}: \sigma\pi'=\pi\sigma}
\chi_{2\lambda}(\sigma^{-1})$.


\renewcommand\MR[1]{\relax\ifhmode\unskip\spacefactor3000 \space\fi
  \MRhref{#1}{{\sc mr}}}
\renewcommand{\MRhref}[2]{%
  \href{http://www.ams.org/mathscinet-getitem?mr=#1}{#2}}
\bibliography{../biblio}
\bibliographystyle{amsplainhyper}

\end{document}